\documentclass{amsart}
\usepackage{amsmath,amssymb,amsthm,latexsym}
\usepackage[all]{xy} 

\newcommand{\R}{\mathbb{R}}
\newcommand{\Rn}{\R^n}
\newcommand{\tn}{\mathbb{T}^n}
\newcommand{\Z}{\mathbb{Z}}
\newcommand{\cotan}{\mathrm{T}^*}
\newcommand{\uc}{\mathbb{R}^3 \setminus \{0\}}
\newcommand{\de}{\mathrm{d}}
\newcommand{\fib}{\xymatrix@1{ F \;\ar@{^{(}->}^-{\iota}[r] & M
    \ar[r]^-{\pi} & B}}
\newcommand{\fibr}{\xymatrix@1{ F \;\ar@{^{(}->}[r]^-{\iota'} & M'
    \ar[r]^-{\pi'} & B}}
\newcommand{\fibpl}{\xymatrix@1{ \mathrm{H}_1(F,\Z) \;\ar@{^{(}->}[r] & \mathcal{P}
    \ar[r] & B}}
\newcommand{\fibrpl}{\xymatrix@1{ \mathrm{H}_1(F,\Z) \;\ar@{^{(}->}[r] & \mathcal{P}'
    \ar[r] & B}}
\newcommand{\affr}{\mathrm{Aff}(\Rn)}
\newcommand{\afft}{\mathrm{Aff}(\tn)}
\newcommand{\gln}{\mathrm{GL}(n,\Z)}
\newcommand{\zn}{\Z^n}
\newcommand{\caff}{\mathrm{B}\afft}
\newcommand{\toral}{I \times \tn}
\newcommand{\linear}{\gln \times \mathrm{0}}
\newcommand{\ctoral}{\mathrm{B}(\toral)}
\newcommand{\clinear}{\mathrm{B}(\linear)}
\newcommand{\cgln}{\mathrm{B}\gln}
\newcommand{\rpr}{\R\mathrm{P}^2 \times \R}
\newcommand{\tth}{\mathbb{T}^3}
\newcommand{\zt}{\mathbb{Z}^3}
\newcommand{\rt}{\mathbb{R}^3}
\newcommand{\ztw}{\mathbb{Z}_2}
\newcommand{\xvec}{\mathbf{x}}
\newcommand{\tvec}{\mathbf{t}}
\newcommand{\twcoho}{\mathrm{H}^2(\rpr ; \zt_{f_*})}
\newcommand{\rpt}{\mathbb{R}\mathrm{P}^2}
\newcommand{\twco}{\mathrm{H}^2(\rpt; \zt_{f_*})}

\theoremstyle{definition}
\newtheorem{defn}{Definition}[section]
\newtheorem{exm}{Example}[section]
\newtheorem{claim}{Claim}[section]
\newtheorem{rk}{Remark}[section]

\theoremstyle{plain}
\newtheorem{thm}{Theorem}[section]
\newtheorem{lemma}{Lemma}[section]
\title{Topological classification of Lagrangian fibrations}
\author{Daniele Sepe}
\address{School of Mathematics and Maxwell Institute for Mathematical Sciences, The University of Edinburgh, James
  Clerk Maxwell Building, King's Buildings, Edinburgh, EH9 3JZ, UK}
\email{d.sepe@sms.ed.ac.uk}
\thanks{I would like to thank Toby
  Bailey for his insightful comments on earlier versions of this note.}
\begin{document}
\begin{abstract}
 We define topological invariants of regular Lagrangian fibrations
 using the integral affine structure on the base space and we show
 that these coincide with the classes known in the literature. We also
 classify all symplectic types of Lagrangian fibrations with base
 $\rpr$ and fixed monodromy representation, generalising a
 construction due to Bates in \cite{bates_ob}.
\end{abstract}
\maketitle
\tableofcontents

\section{Introduction}\label{sec:introduction}
The geometry and topology of completely integrable Hamiltonian systems
have been of interest since the $19^{\mathrm{th}}$ century. By a \emph{completely
  integrable Hamiltonian system}, in this paper we mean a
$2n$-dimensional symplectic manifold $(M,\omega)$ and functions
$f_1,\ldots,f_n : M \to \R$ satisfying the following conditions

\begin{enumerate}
\item they are in \emph{involution}, meaning that $\{f_i,f_j\} = 0$
  for all $i,j$, where $\{.,.\}$ denotes the Poisson bracket induced
  by the symplectic form;
\item they are \emph{functionally independent} almost everywhere - \textit{ i.e.} $\de f_1 \wedge \ldots \wedge \de f_n \neq 0$ on a set of full measure.
\end{enumerate}
\noindent
We remark that we have only used the Poisson structure on $M$ to
define a completely integrable Hamiltonian system and so there is an
associated notion for Poisson manifolds, which we shall not consider
here.\\
\linebreak[1]
\indent
In this paper, we study the global topology of Lagrangian
fibrations, first introduced by Duistermaat in \cite{dui} in the
theory of completely integrable Hamiltonian systems.

\begin{defn}
We say a fibration $\fib$ is a \emph{Lagrangian fibration} if
$(M,\omega)$ is a $2n$-dimensional symplectic manifold, $B$ is an
$n$-dimensional connected manifold, $F$ is compact and
$\omega|_{\pi^{-1}(b)} = 0 $ for all $b \in B$.  
\end{defn}

Duistermaat in \cite{dui} showed that, locally, a Lagrangian fibration
is given by a completely integrable Hamiltonian system; hence, local
properties of the latter are still true for these fibrations. The
Liouville-Arnold theorem completely determines the geometric structure
of a completely integrable Hamiltonian system in a neighbourhood of a
regular level of the map $\mathbf{f}=(f_1,\ldots,f_n) : M \to \Rn$. For completeness, we state
this classical theorem, referring the reader to \cite{arnold} or \cite{dui} for a proof.

\begin{thm}[Liouville-Arnold Theorem] \label{thm:la}
  Let $\mathbf{f}=(f_1,\ldots,f_n): M \to \Rn$ be a completely integrable
  Hamiltonian system with $n$ degrees of freedom and let $x \in \Rn$
  be a regular value in the image of $\mathbf{f}$. Suppose that $\mathbf{f}^{-1}(x)$ has
  a compact, connected component, denoted by $F_x$. Then
  \begin{itemize}
  \item $F_x$ is a Lagrangian submanifold of $M$ and is diffeomorphic
    to $\tn$;
  \item there is a neighbourhood $V$ of $F_x$ in $M$ that is
    symplectomorphic to an open neighbourhood $W$ of $\tn$ in $\cotan
    \tn$, as shown in the diagram below.
    \begin{displaymath}
       \xymatrix{M \ar@{<-^{)}}[r] \ar@{<-^{)}}[dr] &\; V \ar[r]^-{\tilde{\varphi}} \ar[d]
& W\; \ar[d] \ar@{^{(}->}[r] & \mathrm{T}^*\tn  \\
& \; F_x \ar[r]^-{\varphi} & \tn \;\ar@{^{(}->}[ur] &}
    \end{displaymath}
  \end{itemize}
\noindent

Clearly, $W \cong D^n \times \tn$. Let $\tilde{\varphi}(p)=
(a^1(p),\ldots,a^n(p),\alpha^1(p), \ldots, \alpha^n(p))$.
The coordinates $a^i$ are called the \textbf{actions} and depend
smoothly on the functions $f_i$. The coordinates $\alpha^i$ are called
the \textbf{angles}.
\end{thm}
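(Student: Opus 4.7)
The plan is to carry out the classical three-step argument: show $F_x$ is Lagrangian and toroidal, build a tubular model using commuting flows, and then adjust coordinates so that $\omega$ acquires its Darboux form on $T^*\tn$.

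First I would exploit the involutivity hypothesis. Because $\{f_i,f_j\} = 0$, the Hamiltonian vector fields $X_{f_i}$ commute and are everywhere tangent to the fibres of $\mathbf{f}$. Near the regular value $x$, the differentials $\de f_i$ are linearly independent, hence so are the $X_{f_i}$, and they span $TF_x$. The identity $\omega(X_{f_i},X_{f_j}) = \{f_i,f_j\} = 0$ then forces $TF_x$ to be isotropic of maximal dimension, so $F_x$ is Lagrangian. Compactness of $F_x$ makes each $X_{f_i}$ complete, producing a smooth $\Rn$-action on $F_x$ with open, and hence (by connectedness) transitive, orbits. The isotropy group is a discrete subgroup of $\Rn$; cocompactness forces it to be a full rank lattice $\Gamma_x$, so $F_x \cong \Rn/\Gamma_x \cong \tn$.

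Next I would extend the picture to a tubular neighbourhood $V$ of $F_x$. Pick a small Lagrangian section $\sigma$ of $\mathbf{f}$ over a disk neighbourhood of $x$ (Lagrangian sections exist because the fibres are Lagrangian and $\mathbf{f}$ is a submersion at $x$). The commuting flows of the $X_{f_i}$ define a fibre-preserving action of $\Rn$ on $V$ whose stabiliser of $\sigma$ is a smoothly varying lattice $\Gamma_{f(p)} \subset \Rn$; this gives the diffeomorphism $V \cong D^n \times \tn$ and identifies $V$ with an open neighbourhood $W$ of the zero section in $\cotan \tn$ as a smooth fibre bundle. The more subtle point is that this trivialisation is \emph{not} a priori a symplectomorphism: the lattice $\Gamma$ depends on the choice of generators $f_i$, and the section $\sigma$ need not correspond to the zero section of $\cotan \tn$ under the naive map.

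The main obstacle is therefore the last step: replacing the $f_i$ by genuine action coordinates $a^i$ which straighten $\omega$ into $\sum \de a^i \wedge \de \alpha^i$. For this I would invoke the standard period integral construction: choose a local primitive $\theta$ of $\omega$ (available after shrinking $V$, since $V$ retracts onto the Lagrangian $F_x$ and $\omega|_{F_x} = 0$), pick a basis $\gamma_1(b),\ldots,\gamma_n(b)$ of $\mathrm{H}_1(\pi^{-1}(b),\Z)$ depending smoothly on $b$ near $x$, and set
\begin{equation*}
a^i(p) = \frac{1}{2\pi}\oint_{\gamma_i(\mathbf{f}(p))} \theta.
\end{equation*}
A computation using Stokes's theorem and the fact that $\omega$ is closed shows that the $a^i$ depend only on $\mathbf{f}(p)$, that they are smooth functions of the $f_j$, and that the Hamiltonian flows of $a^i$ have period $2\pi$ on each fibre; this supplies the desired angle coordinates $\alpha^i$ conjugate to $a^i$ via the extended section, whence $\tilde{\varphi}^*\omega_{\mathrm{can}} = \omega$ on a possibly smaller $V$. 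The main technical friction is checking the independence of choices (primitive $\theta$, cycle basis, section $\sigma$) and verifying the Poisson brackets $\{a^i,\alpha^j\} = \delta^i_j$, which together close the argument.
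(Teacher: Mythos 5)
The paper gives no proof of this theorem: it is stated for completeness, with the reader referred to Arnold and to Duistermaat for the argument. Your sketch is precisely the classical proof found in those references --- isotropy of the span of the commuting $X_{f_i}$, the $\Rn$-action with cocompact lattice stabiliser giving $F_x \cong \tn$, and action variables defined by period integrals of a local primitive of $\omega$ --- and it is correct in outline, with the genuinely delicate points (existence of a local Lagrangian section, smooth dependence of the period lattice, and the verification that the resulting chart is symplectic) correctly flagged rather than elided.
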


In \cite{dui}, Duistermaat found that there are two topological
obstructions to finding a \emph{global} set of action-angle
coordinates in a Lagrangian fibration, namely

\begin{itemize}
\item the \emph{monodromy} - which can be seen as the obstruction for
  the fibration to be a principal $\tn$-bundle (Theorem \ref{thm:la}
  guarantees that a Lagrangian fibration is locally a principal
  $\tn$-bundle);
\item the \emph{Chern class} - a generalisation of the Chern class for
  principal $\tn$-bundles, which can be seen as the obstruction for
  the fibration to admit a global section $s:B \to M$.
\end{itemize}
\noindent
More recent work by Dazord and Delzant \cite{daz_delz} (where they
deal with the more general concept of isotropic fibrations) and by
Zung in \cite{zung_symp1} and \cite{zun_symp2} has shun more light on the global
geometric and topological properties of Lagrangian fibrations. In
\cite{zun_symp2} Zung introduced a \emph{symplectic} invariant of Lagrangian
fibrations called the \emph{Lagrangian class}, which can be seen as
the obstruction to the existence of a symplectomorphism between two
Lagrangian fibrations with the same monodromy and Chern class. He also
generalised these invariants to singular Lagrangian fibrations, using
a sheaf theoretic approach to the definition of these invariants (in
line with the traditional approach). In this paper, we shall only be
concerned with regular Lagrangian fibrations (i.e. with no
singularities) and we will present an alternative approach to the
definition of monodromy and Chern class using a \emph{classifying
  space}. This method relies on the observation that Lagrangian
fibrations over a base space $B$ are intimately related to
\emph{integral affine structures} on $B$, a relation that is
illustrated by Lemma \ref{lemma:ias}. We remark that the importance of
the integral affine structure in studying Lagrangian fibrations has
been highlighted in various papers (see Bates' paper \cite{bates_monchamp} and
Zung's \cite{zun_symp2}).\\

There are various well-known examples of twisted Lagrangian fibrations
in the literature. Cushman in \cite{cush_sph} and Duistermaat in
\cite{dui} explained how monodromy arises in a neighbourhood of a
singularity of a real-world integrable system - the spherical
pendulum. Many other examples of real-world systems with monodromy are
now known (see \cite{cush_dav} for some more examples) and in \cite{zung_ff}
Zung went as far as proving that in systems with two degrees of freedom monodromy
can only arise around focus-focus singularities. Bates in \cite{bates_ob}
constructed examples of Lagrangian fibrations over $\uc \cong S^2
\times \R $ with non-trivial Chern class (and trivial monodromy, since
$\pi_1(\uc)=0$). In this paper, we explicitly construct examples of Lagrangian
fibrations with non-trivial Chern classes and non-trivial monodromy
(but trivial Lagrangian classes). We remark that, once we show that
the base space of our fibrations possesses an integral affine structure,
results of Dazord and Delzant (in \cite{daz_delz}) and of Zung (in
\cite{zun_symp2}) show that these fibrations can be constructed. In our
construction, we use methods from \cite{bates_ob} and \cite{sf}.\\

The structure of this paper is the following. In section
\ref{sec:ias}, we explain the relation between integral affine
structures and Lagrangian fibrations, presenting a well-known lemma
(Lemma \ref{lemma:ias}), which we have not found proved in the
literature and is therefore included for completeness. In section \ref{sec:cs}, we use the integral affine
structure induced by the Lagrangian fibration on the fibres to
construct the monodromy and Chern class of Lagrangian fibrations using
classifying spaces. Examples of twisted Lagrangian fibrations are
presented in section \ref{sec:exs}.


\section{Integral Affine Structures and Lagrangian fibrations}\label{sec:ias}
In this section we establish the relation between integral affine
structures on a manifold $B$ and Lagrangian fibrations with base space
$B$.
\begin{defn}
  \label{defn:iag}
The semidirect product $\affr = \gln \ltimes \Rn$ where multiplication
is defined by

\begin{equation}
  \label{eq:1}
  (A,x) \cdot (B,y) = (AB, Ay+x)
\end{equation}
is called the group of \emph{integral affine transformations}.
\end{defn}
We note that this is equivalent to saying that there is a short
exact sequence

\begin{equation}
  \label{eq:2}
  \xymatrix@1{1 \ar[r] & \Rn \ar[r] & \affr \ar[r] & \gln \ar[r] & 1}
\end{equation}

\begin{defn}
  \label{defn:iam}
An $n$-dimensional manifold $B$ is an \emph{integral affine manifold}
if it has an $\affr$-atlas - i.e. if there is an open cover
$\{U_{\alpha}\}$ of $B$ by coordinate charts so that the transition
functions lie in $\affr$.
\end{defn}

\begin{exm}
  \begin{enumerate}
  \item $\Rn$ is an integral affine manifold;
  \item any manifold $B$ admitting a local diffeomorphism $B \to \Rn$
    (see \cite{dui});
  \item $\tn$ inherits an integral affine structure from $\Rn$, since
    the action of $\zn$ is through integral affine transformations;
  \item more generally, if $B$ has an integral affine structure and we
    have a free and proper action of a group $\Gamma$ acting by
    integral affine transformations (i.e. we have a representation
    $\Gamma \to \mathrm{Aff}(B)$), then the quotient $B/\Gamma$ inherits an
    integral affine structure).
  \end{enumerate}
\end{exm}

It is well-known that the base space $B$ of a Lagrangian fibration is
an integral affine manifold (see \cite{dui}, \cite{bates_monchamp}, \cite{zun_symp2}). In
the following lemma we prove this fact and its converse, namely that
an integral affine manifold $B$ is the base space of some Lagrangian fibration.
\begin{lemma}\label{lemma:ias}
  A manifold $B$ is the base space of a Lagrangian fibration if and
  only if it is an integral affine manifold.
\end{lemma}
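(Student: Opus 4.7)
The plan is to prove the two directions separately, using the Liouville--Arnold theorem for the forward direction and an explicit cotangent bundle construction for the converse.

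For the forward direction, suppose $\fib$ is a Lagrangian fibration. Theorem \ref{thm:la} produces around each $b \in B$ a chart $U$ and action coordinates $a^1,\ldots,a^n : U \to \R$, giving a local diffeomorphism $U \to \R^n$. I would then show that any two such charts $(U,a)$ and $(U',a')$ are related on $U \cap U'$ by an element of $\affr$. The key input is that the action coordinates $a^i$ are, up to additive constants, determined by a choice of basis of $\mathrm{H}_1(F_b, \Z)$ for nearby fibers, via the integrals of a primitive of $\omega$ over the corresponding cycles (equivalently, via the period lattice $\mathcal{P} \subset \cotan B$ whose fiber is $\{\xi \in \cotan_b B : \text{the time-}\xi\text{ flow of the fiberwise linear symplectic vector field is trivial}\}$). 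Two bases of the period lattice differ by an element of $\gln$, so the Jacobian of the transition $a \mapsto a'$ lies in $\gln$; the freedom in the base point contributes a translation. Hence the transition is in $\affr$, giving $B$ the structure of an integral affine manifold.

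For the converse, suppose $B$ is an integral affine manifold. I would construct the Lagrangian fibration $M := \cotan B / \Lambda \to B$, where $\Lambda \subset \cotan B$ is the lattice subbundle whose local sections are generated, in any integral affine chart $(U, x^1,\ldots,x^n)$, by the covectors $\de x^1, \ldots, \de x^n$. Well-definedness of $\Lambda$ reduces to noting that on overlaps the transition $x' = Ax + b$ with $A \in \gln$ has differential $A$, which preserves $\zn \subset \Rn$ and hence sends the local generators $\de x^i$ to an integer basis of $\de x'^i$. The canonical symplectic form $\omega_{\mathrm{can}}$ on $\cotan B$ is invariant under fiberwise translation by $\Lambda$ (since $\Lambda$ is a locally constant lattice of closed $1$-forms on $B$), so it descends to a symplectic form on $M$. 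The projection $M \to B$ has compact torus fibers on which the descended form vanishes, so it is Lagrangian.

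The main obstacle is the forward direction, specifically the intrinsic characterization of how action coordinates transform on overlaps. The subtlety is that actions are only defined up to the choice of a local basis of the period lattice $\mathcal{P}$ and a base point; I would need to argue cleanly that these two freedoms account for precisely the group $\affr$ and nothing more, so that the $\affr$-cocycle is genuinely well-defined independently of choices. Once this is established, both directions assemble into the equivalence stated in the lemma.
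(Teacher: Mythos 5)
Your proposal is correct and follows essentially the same route as the paper: the forward direction rests on the integrality of the period lattice (the paper phrases this as the statement that the only Hamiltonian vector fields generating system-preserving $S^1$-actions over an overlap are integer combinations of the $X_i$, which is equivalent to your argument that two bases of $\mathrm{H}_1(F_b,\Z)$ differ by an element of $\gln$), and your converse construction $\cotan B/\Lambda$ is the intrinsic form of the paper's explicit cocycle gluing of the pieces $U_\alpha \times \tn$ with transition functions $(A_{\beta\alpha}\xvec + \mathbf{c}_{\beta\alpha}, (A_{\beta\alpha}^{-1})^T\tvec)$, a bundle the paper itself later identifies with $\cotan B/\mathcal{P}$. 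The subtlety you flag in the forward direction is precisely the step the paper discharges with the integer-combination argument, so your plan assembles into the same proof.
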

\begin{proof}
First we prove that the base space of a Lagrangian fibration naturally
inherits an integral affine structure. Suppose $B$ is the base of a
Lagrangian fibration. Choosing an open cover $\{U_i\}$ of $B$ by
coordinate neighbourhoods is equivalent to choosing local action-angle
coordinates $(a_i^1,\ldots,a_i^n,\alpha_i^1,\ldots,\alpha_i^n)$ over
each $U_i$. If needed, refine this cover so that it is a good cover in
the sense of Leray. We wish to determine how two sets of local action coordinates
are related. Let $(a^1,\ldots,a^n)$ and
$(\tilde{a}^1,\ldots,\tilde{a}^n)$ be local action coordinates
over $U_i \cap U_j$. Theorem \ref{thm:la} shows that each set induces
a system-preserving Hamiltonian $\tn$-action on $\pi^{-1}(U_i \cap
U_j)$. Taking  $(a^1,\ldots,a^n)$ as the local action coordinates, we
see that the only Hamiltonian vector fields that can induce a
system-preserving $S^1$-action on $\pi^{-1}(U_i \cap
U_j)$ are of the form

\begin{equation*} Y = \sum_{i=1}^n m_i X_i \end{equation*}

\noindent
where $X_i$ is the Hamiltonian vector field of the coordinate function
$a^i$ and $m_i \in \mathbb{Z}$. In particular, this shows that for
each $j$ there exist constants $m_{ij} \in \mathbb{Z}$ such that

\begin{equation*} \tilde{X}_j = \sum_{i=1}^n m_{ij} X_i
\end{equation*}

\noindent
where $\tilde{X}_j$ is the Hamiltonian vector field of the coordinate
function $\tilde{a}^j$ (here we use the fact that the overlap of any
two open sets is connected). Using the definition of Hamiltonian vector
fields we get that

\begin{align}
\mathrm{d}\tilde{a}^j & = \iota(\tilde{X}_j) \omega = \iota (
\sum_{i=1}^n m_{ij} X_i) \nonumber \\
 & = \sum_{i=1}^n m_{ij}(\iota(X_i)\omega) = \sum_{i=1}^n m_{ij}
\mathrm{d}a^i \nonumber \end{align}

\noindent
Hence we have that, for each $j$, the $\mathrm{d}\tilde{a}^j$ are
integral linear combinations of the $\mathrm{d}a^i$. This shows that
there exist constants $c_j \in \mathbb{R}$ such that

\begin{equation*} \tilde{a}^j =  \sum_{i=1}^n m_{ij} a^i + c_j
\end{equation*}
\noindent
Swapping the roles of $a^i$ and $\tilde{a}^j$, we get that, for each
$i$ there exist constants $\tilde{m}_{ji} \in \mathbb{Z}$ and $\tilde{c}_i \in
\mathbb{R}$ such that

\begin{equation*} a^i =  \sum_{j=1}^n \tilde{m}_{ji} \tilde{a}^j +
  \tilde{c}_i \end{equation*}
\noindent
Combining the two equations above, we see that the matrix $(m_{ij})$
must lie in $\gln$ and so the result follows.\\
\indent
We now need to show that the existence of an integral affine structure
on $B$ implies the existence of a Lagrangian fibration over $B$. Fix
an affine atlas on $B$ and let $\{U_{\alpha}\}$ be an open cover of
$B$ by sets in this
atlas. By shrinking the $U_{\alpha}$
if needed, we may assume that each $U_{\alpha}$ is contractible and
$U_\alpha \cap U_\beta$ is connected. Let
$\mathcal{A}_{\alpha \beta} = A_{\alpha \beta} + \mathbf{c}_{\alpha \beta}$
denote the transition function on the overlap $U_{\alpha} \cap
U_{\beta}$, where $A_{\alpha \beta} \in \gln$ and $\mathbf{c}_{\alpha \beta}
\in \Rn$.\\
\noindent
Define over each $U_{\alpha}$ the trivial Lagrangian fibration

\begin{equation*} \xymatrix@1{ \tn \; \ar@{^{(}->}[r] & U_{\alpha}
    \times \tn_{\alpha} \ar[r] & U_{\alpha}} \end{equation*}
\noindent
with symplectic form defined by

\begin{equation*} \omega_{\alpha} = \sum_i \mathrm{d}x^i_{\alpha}
  \wedge \mathrm{d}t^i_{\alpha} \end{equation*}

\noindent
where $x^i_{\alpha}$ are affine coordinates over $U_{\alpha}$ and
$t^i_{\alpha}$ are the standard coordinates on $\tn$. Define (wherever
it makes sense - i.e. on overlapping coordinate neighbourhoods) maps
$\psi_{\beta \alpha} : U_{\alpha} \cap U_{\beta} \times \tn_{\alpha}
\to U_{\alpha} \cap U_{\beta}
\times \tn_{\beta}$ by 

\begin{equation*} (\mathbf{x}_{\alpha},\mathbf{t}^i_{\alpha}) \mapsto
  (A_{\beta \alpha} \mathbf{x}_{\alpha} + \mathbf{c}_{\beta \alpha},
  (A_{\beta \alpha}^{-1})^T \mathbf{t}_{\alpha}) \end{equation*}

\noindent
Note that the maps $\{\psi_{\beta \alpha}\}$ satisfy the cocycle
condition, since the $\mathcal{A}_{\beta \alpha}$ clearly do and the
map from the affine group to its linear subgroup is a
homomorphism. Hence, these maps can be used to construct a
$\tn$-bundle over $B$. We note
that the $\omega_{\alpha}$ patch together to
give a globally defined symplectic form $\omega$ on the total space of
the bundle. Finally, we observe that each fibre is still Lagrangian
and so we have constructed a Lagrangian fibration over $B$ as
required.

\end{proof}

We note that using the methods of the above lemma, we can also show
that if $B$ is the base space of a Lagrangian fibration, the structure
group of $\cotan B$ can be reduced to (a subgroup of) $\gln$. The
converse, however, is \emph{not} true, as the following example shows.

\begin{exm}
$S^3$ is parallelisable and so the structure group of $\cotan S^3$ is
trivial. Suppose $S^3$ was the base space of a Lagrangian
fibration. Since it is $2$-connected (see \cite{zun_symp2}), the fibration
need be topologically trivial - i.e. $S^3 \times \mathbb{T}^3 \to
\mathbb{T}^3$. Call $\omega$ the symplectic form on this space. By the
K\"unneth formula, the inclusion of the fibre in the total space $
\iota: \mathbb{T}^3 \to S^3 \times \mathbb{T}^3 $ induces an
isomorphism

\begin{equation}
  \label{eq:4}
\iota^* : \mathrm{H}^2(S^3 \times \mathbb{T}^3; \R) \to \mathrm{H}^2(\mathbb{T}^3;\R)
\end{equation}
\noindent
Since $S^3 \times \mathbb{T}^3$ is closed, the cohomology class of
$\omega$ is non-zero. However, the fibre of the fibration is
Lagrangian and so $\iota^*[\omega] = 0$ and we have a
contradiction. 
\end{exm}

\noindent
This example generalises to the following result.

\begin{lemma}
  No sphere $S^n$ for $n \geq 2$ is the base space of a Lagrangian fibration.
\end{lemma}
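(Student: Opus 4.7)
The plan is to adapt the preceding $S^3$-example so that it handles every $n\geq 2$ simultaneously, replacing the explicit Künneth calculation by an application of the Leray-Serre spectral sequence of the hypothetical fibration. Suppose $\pi:(M,\omega)\to S^n$ is a Lagrangian fibration. By the Liouville-Arnold Theorem \ref{thm:la} every fibre is diffeomorphic to $\tn$, and since $S^n$ is simply connected for $n\geq 2$ the monodromy is trivial, so $\pi$ is a principal $\tn$-bundle.

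The next step is to bring in the cohomology Leray-Serre spectral sequence
\begin{equation*}
E_2^{p,q}=\mathrm{H}^p(S^n;\R)\otimes\mathrm{H}^q(\tn;\R)\;\Longrightarrow\; \mathrm{H}^{p+q}(M;\R)
\end{equation*}
together with its Leray filtration $\mathrm{H}^2(M;\R)=F^0\supset F^1\supset F^2\supset F^3=0$. The Lagrangian condition $\iota^*[\omega]=0$, read through the edge map $\mathrm{H}^2(M;\R)\twoheadrightarrow E_\infty^{0,2}\hookrightarrow \mathrm{H}^2(\tn;\R)$, forces $[\omega]\in F^1$. Since $\mathrm{H}^1(S^n;\R)=0$ for $n\geq 2$, the graded piece $F^1/F^2=E_\infty^{1,1}$ vanishes, so $[\omega]\in F^2$; the other edge homomorphism identifies $F^2$ with the image of $\pi^*:\mathrm{H}^2(S^n;\R)\to \mathrm{H}^2(M;\R)$, and one concludes that $[\omega]=\pi^*\alpha$ for some $\alpha\in\mathrm{H}^2(S^n;\R)$.

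From here the argument splits into two cases. For $n\geq 3$ the base cohomology $\mathrm{H}^2(S^n;\R)$ vanishes, hence $\alpha=0$ and $[\omega]=0$, contradicting $[\omega]^n\neq 0$ on the closed symplectic manifold $M$. For $n=2$ the class $\alpha$ may well be nonzero, but $\alpha^2\in\mathrm{H}^4(S^2;\R)=0$, so $[\omega]^2=\pi^*(\alpha^2)=0$, again contradicting $[\omega]^2\neq 0$ on the closed symplectic four-manifold $M$. The main step, really a bookkeeping one, is to use correctly the edge-homomorphism description of the Leray-Serre spectral sequence (namely $\ker\iota^*=F^1$ and $\mathrm{image}(\pi^*)=F^2$); the rest of the proof is then immediate. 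The argument makes visible the underlying mechanism: the Lagrangian condition forces $[\omega]$ to come from the base, but the cohomology of $S^n$ is too sparse to accommodate a class with nonvanishing top power.
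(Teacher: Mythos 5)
Your argument is correct, but it takes a genuinely different route from the paper. The paper splits into two cases: for $n\geq 3$ it relies on the preceding $S^3$ example (the sphere is $2$-connected, so the fibration is topologically trivial and the K\"unneth isomorphism $\mathrm{H}^2(S^n\times\tn;\R)\cong\mathrm{H}^2(\tn;\R)$ contradicts the Lagrangian condition together with $[\omega]\neq 0$ on a closed total space), while for $n=2$ it invokes a result of Duistermaat that a simply connected base of a Lagrangian fibration (hence one with trivial monodromy) must be parallelisable, which $S^2$ is not. Your Leray--Serre argument treats all $n\geq 2$ uniformly: simple connectivity of $S^n$ untwists the coefficient system, the edge-homomorphism identifications $\ker\iota^*=F^1$ and $\mathrm{im}\,\pi^*=F^2$ are used correctly, the vanishing of $E_2^{1,1}=\mathrm{H}^1(S^n;\R)\otimes\mathrm{H}^1(\tn;\R)$ collapses $F^1$ onto $F^2$, and the contradiction with $[\omega]^n\neq 0$ on the compact total space (compact since fibre and base are) is sound; indeed the two cases could even be merged, since $\alpha^n\in\mathrm{H}^{2n}(S^n;\R)=0$ for all $n\geq 1$. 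What each approach buys: the paper's $n=2$ argument exploits the integral affine rigidity of the base, which is the central theme of the paper, whereas yours is purely cohomological, needs neither the topological triviality of the bundle nor the parallelisability input, and generalises immediately to any closed simply connected base $B$ with $\mathrm{H}^1(B;\R)=0$ whose cohomology cannot support a class $\alpha$ with $\alpha^n\neq 0$.
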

\begin{proof}
  We only need to deal with the $n=2$ case. Since $S^2$ is
  simply-connected, if it is the base space of a Lagrangian fibration
  then the fibration has trivial monodromy. It follows from \cite{dui}
  that $S^2$ is parallelisable, but this is a contradiction.
\end{proof}

\begin{rk}
  
Lemma \ref{lemma:ias} implies that in order to determine whether a
manifold $B$ is the base space of a Lagrangian fibration, it is
sufficient to determine whether it is an integral affine manifold.
\end{rk}

Now we turn to the question of determining the structure group of a
Lagrangian fibration. Choose two sets of local action-angle
coordinates $(\mathbf{x}_{\alpha},\mathbf{t}_{\alpha})$,
$(\mathbf{x}_{\beta},\mathbf{t}_{\beta})$ on overlapping trivialising open
sets $U_{\alpha}$, $U_\beta$. It is well-known (see \cite{zun_symp2} and the
proof of Lemma \ref{lemma:ias}) that on $U_\alpha \cap U_\beta$ these
coordinates are related by the following
\begin{equation}
  \label{eq:3}
  (\mathbf{x}_\beta, \mathbf{t}_\beta) = (A_{\beta \alpha}
  \mathbf{x}_\alpha + \mathbf{c}_{\beta \alpha},(A^{-1}_{\beta
    \alpha})^T \mathbf{t}_\alpha + \mathbf{g}_{\beta \alpha}) 
\end{equation}

\noindent
where $A_{\beta \alpha} \in \gln$, $\mathbf{c}_{\beta \alpha} \in \Rn$
and $\mathbf{g}_{\beta \alpha}=(g^1_{\beta \alpha},\ldots, g^n_{\beta \alpha}) : U_\alpha \to \tn$ is a locally
defined function such that

\begin{displaymath}
  \sum_{i=1}^n g^i_{\beta \alpha} \de x^i_\alpha
\end{displaymath}
\noindent
is a closed form.\\
\begin{defn}
  \label{defn:afftor}
The \emph{group of affine toral automorphisms}, denoted by $\afft$, is
the semidirect product $ \gln \ltimes \tn$ with product defined by
\begin{displaymath}
  (A,x) \cdot (B,y) = (AB, Ay +x)
\end{displaymath}
\end{defn}
\noindent
We have a commutative diagram of short exact sequences

\begin{displaymath}
  \xymatrix{1 \ar[r] & \Rn \ar[r] \ar[d]^-{q} & \affr \ar[r] \ar[d] & \gln
    \ar[r] \ar@{=}[d]& 1 \\
1 \ar[r] & \tn \ar[r] & \afft \ar[r] & \gln \ar[r] & 1}
\end{displaymath}
\noindent
relating the integral affine group $\affr$ with the group of toral
affine automorphisms $\afft$, where the map $q : \Rn \to \tn$ is the standard
covering map.\\
Equation (\ref{eq:3}) shows that the structure group of a Lagrangian
fibration can be reduced to $\afft$. Hence, if we are given a
Lagrangian fibration $\fib$, by passing to the
associated principal $\afft$-bundle, we can study its isomorphism type
by looking at the homotopy type of the \emph{classifying map}

\begin{equation}
  \label{eq:6}
  f : B \to \mathrm{B} \afft 
\end{equation}
\noindent
where $\mathrm{B}\afft$ denotes the classifying space of the group $\afft$.

\section{Classifying space and obstructions to global action-angle
coordinates} \label{sec:cs}
As we have seen, the structure group of a Lagrangian fibration is
given by $\afft = \gln \ltimes \tn$. Let $\xymatrix@1{\afft \;
  \ar@{^{(}->}[r] & \bar{M} \ar[r] & B}$ denote the principal
$\afft$-bundle associated to a given Lagrangian bundle $\fib$ (for
details, see for instance \cite{davis_kirk}). The isomorphism type of
this bundle is determined by the homotopy class of a \emph{classifying
  map}

\begin{displaymath}
  f : B \to \caff
\end{displaymath}
\noindent
In this section we use this map to define the monodromy and Chern
class of a Lagrangian fibration.\\

We begin by identifying two important closed subgroups of $\afft$,
$H_1 = \toral$ and $H_2 = \linear$, called the \emph{toral} and
\emph{linear} subgroups respectively. For $i=1,2$, we have fibrations
\begin{displaymath}
  \xymatrix@1{\afft/H_i \; \ar@{^{(}->}[r] & \mathrm{B}H_i \ar[r] & \caff}
\end{displaymath}
\noindent
arising as follows. Let $\xymatrix@1{\afft \;\ar@{^{(}->}[r] &
  \mathrm{E}\afft \ar[r] & \caff}$ be the universal bundle for
$\afft$. Each $H_i$ acts freely on $\mathrm{E}\afft$ and so we can
take this space as a model for $\mathrm{E}H_i$. We note that $\afft /
(\toral) \cong \gln$ (as groups,since $\toral \lhd \afft$) and $\afft/(\linear) \cong \tn$ (as
topological spaces). 

\begin{lemma}\label{lemma:htygps}
 The homotopy groups of $\caff$ are given by
 \begin{equation*}
   \label{eq:8}
   \pi_i(\caff) =
   \begin{cases}
     \gln & \text{if $i=1$} \\
     \zn & \text{if $i=2$} \\
     0 & \text{otherwise}
   \end{cases}
 \end{equation*} 
\end{lemma}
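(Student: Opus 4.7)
My plan is to feed the first of the fibrations constructed in the preceding paragraph,
\begin{equation*}
  \gln \;\hookrightarrow\; \ctoral \;\longrightarrow\; \caff
\end{equation*}
(corresponding to $H_1 = \toral$, for which $\afft/H_1 \cong \gln$ since $H_1$ is the normal $\tn$ subgroup), into the long exact sequence in homotopy. This reduces the problem to computing the homotopy of the two outer spaces, both of which are standard.

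First I would identify the homotopy type of $\ctoral$. Topologically $H_1 = \toral$ is just $\tn$, so $\ctoral \simeq \mathrm{B}\tn \simeq (\mathbb{CP}^{\infty})^n$ is an Eilenberg--MacLane space $K(\zn,2)$; hence $\pi_2(\ctoral) = \zn$ and all other homotopy groups vanish. On the other side, $\gln$ is discrete, so $\pi_0(\gln) = \gln$ and $\pi_i(\gln) = 0$ for $i \geq 1$. Plugging these into
\begin{equation*}
  \cdots \to \pi_i(\gln) \to \pi_i(\ctoral) \to \pi_i(\caff) \to \pi_{i-1}(\gln) \to \cdots
\end{equation*}
the sequence collapses degree by degree: for $i \geq 3$ both neighbours vanish, forcing $\pi_i(\caff) = 0$; for $i = 2$ the relevant portion is $0 \to \zn \to \pi_2(\caff) \to 0$, giving $\pi_2(\caff) \cong \zn$; and at the bottom end the sequence terminates with $0 \to \pi_1(\caff) \to \gln \to 0$, giving $\pi_1(\caff) \cong \gln$.

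The only point requiring care is the low-dimensional end, where one must check that the connecting map $\pi_1(\caff) \to \pi_0(\gln)$ is a group isomorphism and not merely a bijection of pointed sets; this follows from the standard identification $\pi_1(\mathrm{B}G) \cong \pi_0(G)$ for any topological group $G$ (applied with $G = \afft$, whose group of components is exactly $\gln$). Beyond this, I do not expect any step to present a genuine obstacle; the computation is essentially the observation that $\afft$ has identity component $\tn$ and component group $\gln$, with no higher homotopy, repackaged via the classifying space functor.
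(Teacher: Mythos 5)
Your proposal is correct and follows exactly the paper's route: both use the fibration $\gln \hookrightarrow \ctoral \to \caff$, the fact that $\ctoral$ is a $\mathrm{K}(\zn,2)$, and the long exact sequence in homotopy. You simply spell out the degree-by-degree collapse and the identification $\pi_1(\mathrm{B}G)\cong\pi_0(G)$ that the paper leaves implicit.
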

\begin{proof}
 Consider the fibration arising from the
 action of the toral subgroup
 \begin{equation}
   \label{eq:7}
   \xymatrix@1{\gln \;\ar@{^{(}->}[r] & \ctoral \ar[r] & \caff}
 \end{equation}
 \noindent
 Note that $\ctoral$ is a $\mathrm{K}(\zn,2)$ as it is the classifying
 space of an $n$-torus. Using the long exact sequence in homotopy (see
 \cite{davis_kirk}), we obtain the result.
\end{proof}

We now deal with the other fibration, arising from the action of the
linear subgroup $\linear$

\begin{equation}
  \label{eq:9}
  \xymatrix@1{\tn \; \ar@{^{(}->}[r]^-{\iota} & \clinear \ar[r]^-{\pi} & \caff}
\end{equation}
\noindent
The long exact sequence in homotopy for this fibration ends as follows

\begin{equation}
  \label{eq:10}
  \xymatrix@1{\ldots \ar[r] & \pi_1(\tn) \ar[r]^-{\iota_*} & \pi_1(\clinear) \ar[r]^-{\pi_*} & \pi_1(\caff)
    \ar[r] & 0}
\end{equation}
\noindent
since $\pi_0 (\tn) = 0$. Note that $\pi_1(\clinear)$ is a
$\mathrm{K}(\gln,1)$ since $\gln$ is discrete group. In particular, we
have that $\pi_1(\clinear) \cong \pi_1(\caff)$. However, we also have
that $\pi_*$ is a surjection and so

\begin{equation}
  \label{eq:11}
  \pi_1(\clinear) \cong \pi_1(\caff) \cong \pi_1(\clinear)/\ker \pi_* 
\end{equation}
\noindent
where the first isomorphism follows from Lemma \ref{lemma:htygps}.
Equation (\ref{eq:11}) shows that $\pi_*$ is actually an isomorphism
and we will use this fact when defining the Chern class of a
Lagrangian fibration.\\

The key idea that we use to define these invariants is to define them as obstructions to the existence of a lift
of the classifying map $f$ to maps $\bar{f} : B \to \ctoral$ and
$\tilde{f} : B \to \clinear$ into the total spaces of the fibrations
constructed above. We will use the following theorem, quoted here
without proof (see \cite{huse}).

\begin{thm}\label{thm:huse}
  Let $H \leq G$ be a closed subgroup of a topological group $G$. The
  structure group of a principal $G$-bundle $\xymatrix@1{G
   \; \ar@{^{(}->}[r] & P \ar[r] & B}$ can be reduced to $H$ if and only
  if there exists a lift $ \bar{F}: B \to \mathrm{B}H $ of the
  classifying map $F: B \to \mathrm{B}G$, as illustrated below

  \begin{displaymath}
       \xymatrix{ & \mathrm{B}H \ar[d]^-{\pi_\mathrm{B}} \\
   B \ar@{.>}[ur]^-{\bar{F}} \ar[r]^-{F} & \mathrm{B}G}
  \end{displaymath}
  where $\xymatrix@1{G/H \;\ar@{^{(}->}[r] & \mathrm{B} H \ar[r]& \mathrm{B}
    G}$ is a fibration obtained as above.
\end{thm}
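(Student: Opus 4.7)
The plan is to deduce the theorem from two standard bijections: first, reductions of the structure group of a principal $G$-bundle to $H$ are in bijection with sections of the associated bundle $P/H \to B$ (whose fibre is $G/H$); second, when $P/H \to B$ arises as a pullback of a bundle over $\mathrm{B}G$, sections of that pullback correspond by the universal property to lifts of the classifying map into the total space. The theorem then reduces to identifying the bundle pulled back as $\mathrm{B}H \to \mathrm{B}G$, which is exactly the fibration in the statement.

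To set this up, I would take the universal $G$-bundle $\mathrm{E}G \to \mathrm{B}G$ and exploit the fact that $H$ acts freely on $\mathrm{E}G$, so that $\mathrm{E}G$ is a model for $\mathrm{E}H$ and $\mathrm{B}H = \mathrm{E}G / H$, with the projection $\mathrm{B}H \to \mathrm{B}G$ being precisely the $G/H$-fibration of the statement. Writing $P \cong F^{*}\mathrm{E}G$, functoriality of the quotient by $H$ gives the isomorphism $P/H \cong F^{*}(\mathrm{E}G / H) = F^{*}\mathrm{B}H$, so sections of $P/H \to B$ biject with lifts $\bar{F} : B \to \mathrm{B}H$ of $F$.

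For the forward direction, a reduction of structure group provides a principal $H$-bundle $Q \to B$ with $Q \times_H G \cong P$, and the natural section $q \mapsto [q, eH]$ of $Q \times_H (G/H) \cong P/H$ produces the desired lift $\bar{F}$. Conversely, given a lift $\bar{F}$, pulling back the principal $H$-bundle $\mathrm{E}G \to \mathrm{B}H$ yields $Q = \bar{F}^{*}\mathrm{E}G$, and homotopy commutativity of the lifting triangle gives $Q \times_H G \cong \bar{F}^{*}(\mathrm{E}G \times_H G) \cong F^{*}\mathrm{E}G \cong P$, which exhibits $Q$ as an $H$-reduction of $P$.

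The main technical obstacle is checking that pullback genuinely commutes with the formation of associated bundles and that the correspondence between isomorphism classes of $H$-reductions and sections of $P/H \to B$ is a true bijection, not merely a surjection. Both points rest on the covering homotopy property for principal bundles and on careful bookkeeping of equivalences of reductions against homotopies of lifts; I expect these verifications to be entirely formal but notationally heavy, which is presumably why the authors elect to quote the statement from Husemoller rather than reproduce the argument.
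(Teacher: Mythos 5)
The paper states this theorem without proof, deferring to Husemoller, and your argument is precisely the standard proof from that source, built on the same construction the paper sets up just before the statement ($H$ acting freely on $\mathrm{E}G$, so that $\mathrm{B}H = \mathrm{E}G/H$ and $\mathrm{B}H \to \mathrm{B}G$ is the $G/H$-fibration). Your chain of identifications --- reductions of structure group correspond to sections of $P/H \to B$, the isomorphism $P/H \cong F^{*}(\mathrm{E}G/H)$, and the correspondence between sections of a pullback and lifts of the classifying map --- is correct, so there is nothing to object to.
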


\subsection{Monodromy} \label{sec:monodromy}
We first recall the classical definition (see \cite{dui},
\cite{bates_monchamp}, \cite{cush_dav} and \cite{zun_symp2}) of
monodromy and then give an entirely topological definition. Associated
to a Lagrangian fibration $\fib$, there is a \emph{period lattice
  bundle}
\begin{displaymath}
  \xymatrix@1{\mathrm{H}_1(F,\mathbb{Z}) \;\ar@{^{(}->}[r] & \mathcal{P}
    \ar[r] & B}
\end{displaymath}
\noindent
where the generators of $\mathrm{H}_1(F,\mathbb{Z})$ define a local
Hamiltonian $\tn$-action on $M$ (see the proof of lemma
\ref{lemma:ias} or \cite{dui}). An explicit construction of this period lattice bundle is explained in \cite{cush_dav}. We want to remark that if we fix an good (in the sense of Leray) open cover $\{U_\alpha\}$ of $B$ by trivialising neighbourhoods for $\fib$, then we can locally trivialise the period lattice bundle over this cover, with transition functions over $U_\alpha \cap U_\beta$ given by

\begin{displaymath}
(\mathbf{x}_\beta, \mathbf{z}_\beta) = (A_{\beta \alpha} \mathbf{x}_\alpha + \mathbf{c}_{\beta \alpha}, (A_{\beta \alpha}^{-1})^T \mathbf{z}_\alpha)
\end{displaymath}
\noindent 
where $\mathrm{z} \in \mathrm{H}_1(F_x, \Z)$. In particular, it is important to notice that the trivialisations and transition functions for the period lattice bundle are completely determined by those for the associated Lagrangian fibration. \\

The classical definition of monodromy
is precisely as an obstruction for these local Hamiltonian
$\tn$-actions to patch together to yield a global $\tn$-action. If the
monodromy vanishes, then this $\tn$-action is actually free (this is
most evident in the proof of the Liouville-Arnold theorem (Theorem \ref{thm:la})
or in \cite{cush_dav}) and so the Lagrangian fibration is a principal
$\tn$-bundle. Conversely, if there exists a globally defined free
Hamiltonian $\tn$-action on the total space $M$ of a Lagrangian
fibration, then the fibration is a principal bundle and the period
lattice bundle is trivial. It follows therefore from the classical
definition of monodromy (see \cite{cush_dav}) that the monodromy of
the Lagrangian fibration is trivial. In particular, this shows that
the monodromy can be seen as the obstruction to reducing the structure
group of the fibration to the toral subgroup $\toral$. \\

We have the fibration

 \begin{displaymath}
  \xymatrix@1{\gln \;\ar@{^{(}->}[r] & \ctoral \ar[r] & \caff}
\end{displaymath}
\noindent
where the fibre is discrete and the total space is simply connected. A
lift $\bar{f}: B \to \ctoral$ of the classifying map

\begin{displaymath}
  \xymatrix{ & \ctoral \ar[d] \\
B \ar@{.>}[ur]^-{\bar{f}} \ar[r]^-{f} & \caff}
\end{displaymath}
\noindent
exists if and only if 
\begin{equation}
  \label{eq:12}
  f_* : \pi_1(B) \to \pi_1(\caff) \cong \gln
\end{equation}
\noindent
is trivial.\\

It is tempting to define $f_*$ as the monodromy associated to
the Lagrangian fibration, but this would be a naive definition since
we would like monodromy to be an invariant of the \emph{isomorphism}
type of the fibration. Note that if two Lagrangian fibrations $\fib$
and $\fibr$ over $B$ are isomorphic, then clearly the associated
period lattice bundles $\fibpl$ and $\fibrpl$ are isomorphic.
It is
well-known (see \cite{cush_dav}) that the structure group of these
discrete bundles is $\gln$ and, hence, their isomorphism type is
determined by the homotopy class of a map
\begin{displaymath}
  \phi : B \to \cgln
\end{displaymath}
Since $\cgln$ is a $\mathrm{K}(\gln,1)$, then we have that such
homotopy classes are parametrised by
\begin{displaymath}
  [B,\cgln] = \mathrm{Hom}(\pi_1(B),\gln)/(\alpha \sim \beta \alpha
  \beta^{-1})
\end{displaymath}
\noindent
Hence if we let $f, f'$ be the classifying maps associated to the
Lagrangian fibrations $\fib$, $\fibr$ respectively, we have that
$f_*(\pi_1(B))$ and $f'_*(\pi_1(B))$ are \emph{conjugate} subgroups of
$\gln$.

\begin{defn} \label{defn:monodromy-1}
Let $f: B \to \caff$ be the classifying map associated to a Lagrangian
fibration and let $[f_*]$ denote the equivalence class of subgroups of
$\gln$ that are conjugate to $f_*(\pi_1(B))$. Then we call $[f_*]$ the
\emph{free monodromy} of the Lagrangian fibration.  
\end{defn}

From now on, whenever we mention the monodromy of a Lagrangian
fibration we will mean it in the sense of Definition
\ref{defn:monodromy-1}, although we abuse notation and still denote it
as a map $f_*: \pi_1(B) \to \pi_1(\caff)$.

\subsection{Chern Class}\label{sec:chern-class}
Fix a Lagrangian fibration $\fib$ with monodromy $f_* : \pi_1(B) \to
\pi_1(\caff)$. Using the methods of Lemma \ref{lemma:ias}, we can
construct the \emph{trivial $f_*$-fibration} as follows. Let
$\{U_\alpha\}$ be an open cover of $B$ by trivialising open sets for
$\fib$. We know from equation (\ref{eq:3}) that the transition
functions on $U_\alpha \cap U_\beta$ are of the form 

\begin{displaymath}
(\mathbf{x}_\beta, \mathbf{t}_\beta) = (A_{\beta \alpha} \mathbf{x}_\alpha + \mathbf{c}_{\beta \alpha}, (A_{\beta \alpha}^{-1})^T \mathbf{t}_\alpha + \mathbf{g}_{\beta \alpha})
\end{displaymath}
\noindent
Define a new fibration with same trivialisations as the original one and transition functions given by

\begin{displaymath}
(\mathbf{x}_\beta, \mathbf{t}_\beta) = (A_{\beta \alpha} \mathbf{x}_\alpha + \mathbf{c}_{\beta \alpha}, (A_{\beta \alpha}^{-1})^T \mathbf{t}_\alpha)
\end{displaymath}

This new fibration is still Lagrangian since the local symplectic
structures patch together and the fibres are Lagrangian. Note that its
structure group lies entirely in the linear subgroup $\linear$ of
$\afft$. This new Lagrangian fibration has the same monodromy as the
original fibration, since they have isomorphic period lattice bundles,
as can be seen by the construction of a period lattice bundle in
\cite{cush_dav}. We can use the trivialising cover
$\{U_\alpha\}$ of $B$ to define local trivialisations for $\cotan B$,
given by 
 
\begin{displaymath}
(\mathbf{x}_\beta, \mathbf{y}_\beta) = (A_{\beta \alpha} \mathbf{x}_\alpha + \mathbf{c}_{\beta \alpha}, (A_{\beta \alpha}^{-1})^T \mathbf{y}_\alpha)
\end{displaymath}
\noindent
where $\mathbf{y} \in \cotan_{x} B$. We have also seen that the
transition functions for the period lattice bundle $\mathcal{P} \to B$
are precisely the same and so we deduce that the trivial
$f_*$-fibration is isomorphic to the Lagrangian bundle $\cotan B /
\mathcal{P} \to B$ (since they have
the same transition functions and trivialisations over a common
trivialising cover). This explains why
we call this fibration the trivial $f_*$-fibration, as it has trivial
Chern class in the classical definition (see \cite{dui}). Indeed,
Duistermaat defined the the Chern class of a Lagrangian fibration to
be the obstruction to the existence of a bundle isomorphism 
\begin{displaymath}
  \xymatrix{ M \ar[d] \ar[r]^-{\Theta} & \cotan B/\mathcal{P} \ar[d]
    \\
B \ar[r]^-{\theta} & B}
\end{displaymath}
We note that the structure group of $\cotan B /\mathcal{P} \to B$ can
be reduced to the linear subgroup $\linear$ of $\gln$. Hence the
structure group of a Lagrangian fibration that is isomorphic to the
trivial $f_*$-fibration can be reduced to $\linear$. Conversely,
suppose that the structure group of a Lagrangian fibration $\fib$
reduces to $\linear$. Then Theorem \ref{thm:huse} shows that we have a
lift $g: B \to \clinear$ as shown below

\begin{displaymath}
  \xymatrix{ & \clinear \ar[d]^-{\pi} \\
    B \ar[ur]^-{g} \ar[r]^-{f} & \caff}
\end{displaymath}

We wish to show that this Lagrangian fibration is isomorphic to the
trivial $f_*$-fibration. To this end, let $f_0 : B \to \caff$ denote
the (homotopy type of the) classifying map associated to the trivial
$f_*$-fibration. Equation (\ref{eq:11}) shows that the map $\pi_* :
\pi_1(\clinear) \to \pi_1(\caff)$ is an isomorphism. Since the
structure group of the trivial $f_*$-fibration can be reduced to
$\linear$, then $f_0$ has a lift $G : B \to \clinear$. Since $\pi_*$
is an isomorphism, it follows that $g_* = G_*$ (up to
conjugation). The isomorphism type of maps from $B$ to $\clinear$ is
classified precisely by $\mathrm{Hom}(\pi_1(B),\gln)$ up to
conjugation and so $g$ is homotopic to $G$. It is therefore
straightforward to see that $f$ is homotopic to $f_0$ and so the
original Lagrangian fibration is isomorphic to the trivial
$f_*$-fibration. \\

From here on, until the end of the section, we assume that $B$ is a
CW-complex. The obstruction to finding a lift $g : B \to \clinear$ of
the classifying map $f$ of a Lagrangian fibration is an element of a
twisted cohomology group (see \cite{davis_kirk}
\begin{equation}
  \label{eq:5}
  c \in \mathrm{H}^2(B, \zn_\rho)
\end{equation}
\noindent
where the map $\rho : \pi_1(B) \to \zn$ is the composite
\begin{displaymath}
  \xymatrix@1{ \pi_1(B) \ar[r]^-{f_*} & \pi_1(\caff) \ar[r]^-{\chi} & \mathrm{Aut}(\pi_1(\tn)) }
\end{displaymath}
\noindent
Here $\chi$ denotes the action of the fundamental group of $\caff$ on
the fundamental group of the fibre $\tn$ (see \cite{davis_kirk}). We
will abuse notation and write $f_*$ also to mean $\chi \circ f_*$.

\begin{defn} \label{defn:cc}
  $c$ is the \emph{Chern class} associated to the (isomorphism type of
  a) Lagrangian fibration determined by a classifying map $f: B \to \caff$.
  
\end{defn}


\section{Examples of twisted Lagrangian fibrations}\label{sec:exs}
In this section we explicitly construct examples of regular Lagrangian
fibrations with non-trivial monodromy and non-trivial Chern class,
following ideas of Bates (see \cite{bates_ob}) and Sakamoto and
Fukuhara (see \cite{sf}). First we show how to construct a Lagrangian
fibration with base space $\rpr$ and calculate its monodromy $f_*$,
which is non-trivial since $\rpr$ is not parallelisable (see
\cite{dui}). Then, using a result of Dazord and Delzant (see
\cite{daz_delz}) adapted by Zung in \cite{zun_symp2}, we deduce that
\emph{all} cohomology classes in $\twcoho$ can be realised as the
Chern class of some Lagrangian fibration with base $\rpr$. Finally,
we construct these explicitly, first just topologically following
ideas of \cite{sf} and then also taking care of the symplectic form,
generalising a construction carried out in \cite{bates_ob}.\\

For notational ease, set $X=\rpr$ and let $\tilde{X} =\rt - \{0\}$
denote its universal cover. $\tilde{X}$ is the base space of a trivial
Lagrangian fibration

\begin{equation}
  \label{eq:13}
  \xymatrix@1{\tth \; \ar@{^{(}->}[r] & \tilde{X} \times \tth
    \ar[r]^-{\tilde{\pi}} & \tilde{X}}
\end{equation}
\noindent
We remark that we are using the standard integral affine structure on
$\tilde{X}$. There is a free and proper $\ztw$-action on $\tilde{X}$,
given by
\begin{equation}
  \label{eq:14}
  a \cdot \xvec = - \xvec
\end{equation}
\noindent
where $a \in \ztw$ is the non-trivial element. We can lift this action
to the total space of (\ref{eq:13}) by setting

\begin{equation}
  \label{eq:15}
  a \cdot (\xvec, \tvec) = (-\xvec, -\tvec)
\end{equation}

This defines a free and proper $\ztw$-action on $\tilde{X} \times \tth$,
which is by bundle automorphisms (the bundle is trivial) and
symplectomorphisms (since the symplectic form $\omega$ is just $ \sum_{i=1}^3
\de x^i \wedge \de t^i$). Hence, if we quotient the bundle
(\ref{eq:13}) by this action, we obtain a Lagrangian fibration

\begin{equation}
  \label{eq:16}
  \xymatrix@1{\tth \; \ar@{^{(}->}[r] & M_0  \ar[r] & X}
\end{equation}

Since $X$ is not parallelisable (let alone orientable), this fibration
need have non-trivial monodromy.

\begin{claim}
  The monodromy of this fibration is given by the map
  \begin{align*}
    f_*: \pi_1(\rpr) & \to \mathrm{GL}(3,\Z) \\
a & \mapsto - I 
  \end{align*}
\end{claim}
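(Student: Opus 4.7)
The plan is to compute $f_*(a)$ directly from its interpretation as the holonomy of the period lattice bundle
\[
  \xymatrix@1{\mathrm{H}_1(\tth,\Z) \;\ar@{^{(}->}[r] & \mathcal{P} \ar[r] & X}
\]
associated to (\ref{eq:16}); as established in Section \ref{sec:monodromy}, this holonomy classifies the monodromy up to conjugation. The key observation is that $M_0 \to X$ is the $\ztw$-quotient of the trivial Lagrangian fibration $\tilde{X} \times \tth \to \tilde{X}$, and the $\ztw$-action (\ref{eq:15}) acts on each fibre $\tth$ by $\tvec \mapsto -\tvec$, which induces multiplication by $-I$ on $\mathrm{H}_1(\tth,\Z) \cong \zt$.

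To turn this into a computation of $f_*(a)$, I would pick a basepoint $\xvec_0 \in \tilde{X}$ and a path $\tilde{\gamma}:[0,1] \to \tilde{X}$ with $\tilde{\gamma}(0) = \xvec_0$ and $\tilde{\gamma}(1) = -\xvec_0$; its image in $X$ is a loop representing the non-trivial element $a \in \pi_1(X) \cong \ztw$. The pullback of $\mathcal{P}$ to $\tilde{X}$ is the trivial lattice bundle $\tilde{X} \times \zt$, because the lifted Lagrangian fibration is trivial with the standard integral affine structure on $\tilde{X}$. In particular, the standard basis of the fibre of $\mathcal{P}$ over $\xvec_0$ is parallel-transported along $\tilde{\gamma}$ to the standard basis at $-\xvec_0$, and the holonomy around $a$ is obtained by applying the identification of the fibres of $\mathcal{P}$ over $\xvec_0$ and $-\xvec_0$ arising from the deck transformation; by the previous paragraph this identification is $-I$.

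The only subtle point, which is the main obstacle to doing the computation formally, is identifying this holonomy with the map $f_*$ of Definition \ref{defn:monodromy-1}. This follows from the fact, recalled in Section \ref{sec:monodromy}, that $\mathcal{P}$ is a discrete bundle with structure group $\mathrm{GL}(3,\Z)$ whose isomorphism type is classified by the conjugacy class of $f_*$. Hence the holonomy computed above represents $f_*(a)$ up to conjugation; since $-I$ is central in $\mathrm{GL}(3,\Z)$, its conjugacy class is the singleton $\{-I\}$, and $f_*(a) = -I$ as claimed.
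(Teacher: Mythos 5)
Your argument is correct and is essentially the proof given in the paper: both pass to the double cover $\tilde{X}\times\tth\to\tilde{X}$, where the relevant flat structure is trivial so parallel transport along a lift of the generator $a$ is the identity, and then read off the monodromy from the fibrewise action $\tvec\mapsto-\tvec$ of the deck transformation, which induces $-I$. The only cosmetic difference is that you phrase the computation as the holonomy of the period lattice bundle $\mathcal{P}$, whereas the paper computes the holonomy of the flat torsion-free connection on the integral affine base by parallel transporting a covector in $\cotan\tilde{X}$; your closing remark that $-I$ is central in $\mathrm{GL}(3,\Z)$, so the conjugacy ambiguity in Definition \ref{defn:monodromy-1} is harmless, is a nice point the paper leaves implicit.
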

\begin{proof}
  We work on the double covering
  \begin{equation}
    \xymatrix@1{\tth \; \ar@{^{(}->}[r] & \tilde{X} \times \tth
    \ar[r]^-{\tilde{\pi}} & \tilde{X}}
  \end{equation}
Let $c: [0,1] \to \tilde{X}$ be a lift of the generator $a$ of the
fundamental group of $\rpr$ - take for instance the path $c(t) =
(\cos(\pi t),\sin(\pi t), 0)$. It is well-known (see
\cite{bates_monchamp}) that an integral affine manifold $N$ admits a
flat, torsion-free connection $\nabla$, whose holonomy equals the
monodromy of the Lagrangian fibration over $N$ constructed in Lemma
\ref{lemma:ias}. Let $\nabla$ denote the connection on $X$ arising
from the Lagrangian fibration we have just constructed and let
$\tilde{\nabla}$ denote the pullback of this connection by the
covering map $q: \tilde{X} \to X$. By construction, $\tilde{\nabla}$
is the standard connection on $\tilde{X}$ - here we recall that we
started with the standard integral affine structure on $\tilde{X}$ to
construct the Lagrangian fibration with base $X$. We wish to calculate
the holonomy of $\nabla$. Consider any non-zero $\mathbf{v} \in
\cotan_{(1,0,0)} \tilde{X}$ and parallel transport it along
$c$. Recall that the cotangent bundle $\cotan \tilde{X}$ is trivial, so that we
can canonically identify any two of the fibres via the standard
connection $\tilde{\nabla}$. Since parallel transport is trivial,
$\mathbf{v}$ maps to $\mathbf{v} \in \cotan_{(-1,0,0)}
\tilde{X}$. However, under the lift of the $\ztw$-action on
$\tilde{X}$, we have the following identification
\begin{displaymath}
  ((1,0,0),\mathbf{v}) \sim ((-1,0,0),-\mathbf{v})
\end{displaymath}
Hence, projecting down to $X$, we get that the holonomy of the
connection $\nabla$ is given by
\begin{displaymath}
  a \mapsto -I
\end{displaymath}
as required.
\end{proof}

In section \ref{sec:chern-class} we showed that the Chern class of
Lagrangian fibrations with base $X$ and monodromy given by $f_*$ are
classified by elements of the twisted cohomology group $\twcoho$. A
natural question to ask is \emph{which} twisted cohomology classes in
$\twcoho$ can be realised as the Chern class of a Lagrangian fibration
over $X$ with the given monodromy. In the general case, we have no
concrete answer, although in this case we can use a result of Dazord
and Delzant (see \cite{daz_delz}) and Zung (see \cite{zun_symp2}) to
deduce that \emph{all} twisted cohomology classes can be realised.

\begin{thm}[Dazord-Delzant, Zung] \label{thm:daz}
  Let $Y$ be an $n$-dimensional manifold that is the base space of a
  Lagrangian fibration with monodromy given by $f_*$. If
  $\mathrm{H}^3(Y;\R) = 0$, then \emph{all} cohomology classes in
  $\mathrm{H}^2(Y;\zn_{f_*})$ can be realised as the Chern class of
  some Lagrangian fibration with base $Y$ and monodromy as
  before. Furthermore, if $\mathrm{H}^2(Y;\R) = 0$, then the
  Lagrangian class vanishes for any Chern class.
\end{thm}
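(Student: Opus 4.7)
The plan is to prove both assertions via sheaf cohomology on $Y$. Let $\mathcal{P}$ denote the locally constant sheaf on $Y$ corresponding to the period lattice bundle, with stalk $\zn$ and monodromy representation $f_*$, so that $H^2(Y; \mathcal{P}) \cong H^2(Y; \zn_{f_*})$. Let $\mathcal{Z}^1$ denote the sheaf of closed $1$-forms on $Y$. Locally on an integral affine chart, the generators of $\mathcal{P}$ are represented by the closed differentials $\de a^i$ of the action coordinates, so $\mathcal{P}$ sits inside $\mathcal{Z}^1$ and produces a short exact sequence of sheaves
\begin{equation*}
0 \to \mathcal{P} \to \mathcal{Z}^1 \to \mathcal{Z}^1/\mathcal{P} \to 0.
\end{equation*}
By Lemma \ref{lemma:ias} and the discussion of Section \ref{sec:chern-class}, the quotient $\mathcal{Z}^1/\mathcal{P}$ is naturally the sheaf of Lagrangian sections of the trivial $f_*$-fibration $\cotan Y/\mathcal{P}$.

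The first step is to identify the set of isomorphism classes of Lagrangian fibrations over $Y$ with fixed monodromy $f_*$ with the sheaf cohomology group $H^1(Y; \mathcal{Z}^1/\mathcal{P})$. Following Equation (\ref{eq:3}), any such fibration is obtained from the trivial $f_*$-fibration by twisting the toral component of the transition functions by a \v{C}ech $1$-cocycle with values in $\mathcal{Z}^1/\mathcal{P}$; conversely, any such cocycle gives a Lagrangian fibration with the same monodromy. Under this identification, a direct \v{C}ech computation shows that the Chern class of Definition \ref{defn:cc} coincides with the connecting homomorphism
\begin{equation*}
\delta : H^1(Y; \mathcal{Z}^1/\mathcal{P}) \to H^2(Y; \mathcal{P}) \cong H^2(Y; \zn_{f_*}).
\end{equation*}

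Next, I would apply the Poincar\'e lemma to obtain the short exact sequence $0 \to \R \to \Omega^0 \to \mathcal{Z}^1 \to 0$, with $\Omega^0$ a fine (hence acyclic) sheaf. This gives $H^k(Y; \mathcal{Z}^1) \cong H^{k+1}(Y; \R)$ for all $k \geq 1$. Feeding this into the long exact sequence of the short exact sequence above yields
\begin{equation*}
\ldots \to H^2(Y; \R) \to H^1(Y; \mathcal{Z}^1/\mathcal{P}) \to H^2(Y; \zn_{f_*}) \to H^3(Y; \R) \to \ldots
\end{equation*}
The hypothesis $H^3(Y; \R) = 0$ immediately gives surjectivity of $\delta$, so every class in $H^2(Y; \zn_{f_*})$ is realised as the Chern class of some Lagrangian fibration over $Y$ with monodromy $f_*$. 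If additionally $H^2(Y; \R) = 0$, then $H^1(Y; \mathcal{Z}^1) = 0$ and $\delta$ becomes injective, so the isomorphism class of the Lagrangian fibration is determined by its Chern class alone. This is precisely the statement that the Lagrangian class, the obstruction to two fibrations sharing the same monodromy and Chern class being symplectomorphic, vanishes.

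The main obstacle I anticipate is the identification of isomorphism classes with $H^1(Y; \mathcal{Z}^1/\mathcal{P})$ and the verification that the connecting map $\delta$ really coincides with the classifying-space Chern class of Definition \ref{defn:cc}; both steps require a careful \v{C}ech-cocycle comparison between the transition data of a general Lagrangian fibration and those of the trivial $f_*$-fibration, while keeping track of how the monodromy twists the coefficient system at every stage.
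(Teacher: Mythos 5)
The paper does not prove Theorem \ref{thm:daz} at all: it is quoted as a known result of Dazord--Delzant \cite{daz_delz} and Zung \cite{zun_symp2}, so there is no in-paper argument to compare against. Your proposal reconstructs, correctly, the standard proof from those references (going back to Duistermaat \cite{dui}): the exact sequence of sheaves $0 \to \mathcal{P} \to \mathcal{Z}^1 \to \mathcal{Z}^1/\mathcal{P} \to 0$, the identification of $\mathrm{H}^1(Y;\mathcal{Z}^1/\mathcal{P})$ with isomorphism classes of Lagrangian fibrations with period lattice $\mathcal{P}$, the fine resolution $0 \to \R \to \Omega^0 \to \mathcal{Z}^1 \to 0$ giving $\mathrm{H}^k(Y;\mathcal{Z}^1) \cong \mathrm{H}^{k+1}(Y;\R)$ for $k \geq 1$, and then surjectivity (resp.\ injectivity) of the connecting map $\delta$ from $\mathrm{H}^3(Y;\R)=0$ (resp.\ $\mathrm{H}^2(Y;\R)=0$). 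The two points you flag are indeed the only ones requiring care. First, Definition \ref{defn:cc} defines the Chern class obstruction-theoretically via the classifying map into $\caff$, whereas $\delta$ is a \v{C}ech connecting homomorphism; these agree because, by equation (\ref{eq:3}), the toral parts $\mathbf{g}_{\beta\alpha}$ of the transition functions form exactly the $\mathcal{Z}^1/\mathcal{P}$-valued cocycle you describe, and the primary obstruction to lifting $f$ to $\clinear$ is computed by the failure of these to lift to a $\mathcal{Z}^1$-valued cocycle. Second, the claim that ``the Lagrangian class vanishes'' is, in Zung's formulation, precisely the statement that two Lagrangian fibrations with the same monodromy and Chern class are fibrewise symplectomorphic, i.e.\ that $\ker\delta = \mathrm{im}\,\mathrm{H}^1(Y;\mathcal{Z}^1) = \mathrm{im}\,\mathrm{H}^2(Y;\R)$ is trivial; your reading of the exact sequence is therefore the correct one, though you should say explicitly that the Lagrangian class of a fibration is by definition its class in $\mathrm{H}^1(Y;\mathcal{Z}^1/\mathcal{P})$ modulo the subgroup determining the topological type, so that its vanishing is literally the injectivity of $\delta$ on the relevant fibre.
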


Since $\rpr$ satisfies the assumptions of Theorem \ref{thm:daz}, we
deduce that all elements of $\twcoho$ arise as the Chern class of some
Lagrangian fibration with base $\rpr$ and that the symplectic
structure on the total space of these fibrations preserving the
Lagrangian structure is unique up to symplectomorphism.\\
We now compute $\twcoho$ and show that
\begin{displaymath}
  \twcoho \cong \zt
\end{displaymath}
\noindent
using standard techniques in algebraic topology (see
\cite{davis_kirk}). Since $\rpr$ is homotopy equivalent to $\rpt$, it
will suffice to show that $\twco \cong \zt$. Recall that $\rpt$ has a
standard cell decomposition
\begin{displaymath}
  \rpt = e^0 \cup e^1 \cup e^2
\end{displaymath}
inducing a cell decomposition on its universal cover
\begin{displaymath}
  S^2 = e^0_+ \cup e^0_- \cup e^1_+ \cup e^1_- \cup e^2_+ \cup e^2_- 
\end{displaymath}
Set $\pi_1(\rpt) = \pi$. Denote by $\mathrm{C}_i(S^2)$ the
$\Z[\pi]$-module of $i$-cells in $S^2$. For $i=0,1,2$
$\mathrm{C}_i(S^2)$ is a $1$-dimensional $\Z[\pi]$-module, generated
by $e^i_+$. We denote the action of $\pi$ on $\mathrm{C}_i(S^2)$ by
multiplication by $t$, following the notation in
\cite{davis_kirk}. The twisted cellular chain complex for $\rpt$ is
given by

\begin{displaymath}
  \xymatrix@1{0 \ar[r] & \mathrm{C}_2(S^2) \ar[r]^-{\partial_2} &
    \mathrm{C}_1(S^2) \ar[r]^-{\partial_1} & \mathrm{C}_0(S^2) \ar[r]
    & 0}
\end{displaymath}
\noindent
where the differentials are defined as follows
\begin{align*}
  \partial_2(e^2_+) &= (1+t)e^1_+ \\
  \partial_1(e^1_-)&= (1-t)e^0_+
\end{align*}
\noindent
The associated twisted cochain complex is given by

\begin{displaymath}
  \xymatrix@1{0 \ar[r] & \mathrm{Hom}_{\Z[\pi]}(\mathrm{C}_0(S^2),\zt) \ar[r]^-{\delta_1} &
    \mathrm{Hom}_{\Z[\pi]}(\mathrm{C}_1(S^2),\zt) \ar[r]^-{\delta_2} & \mathrm{Hom}_{\Z[\pi]}(\mathrm{C}_2(S^2),\zt) \ar[r]
    & 0}
\end{displaymath}
\noindent
where $\delta_1,\delta_2$ are adjoint maps to $\partial_1, \partial_2$
respectively and we think of $\zt$ as a $\Z[\pi]$-module via the
representation $f_* : \pi \to \mathrm{GL}(3;\Z)$ (extended naturally
to the whole ring).\\
\indent
We are interested in
\begin{displaymath}
  \twco = \frac{\mathrm{Hom}_{\Z[\pi]}(\mathrm{C}_2(S^2),\zt)}{\mathrm{im}\,\delta_2}
\end{displaymath}
Note that $\mathrm{Hom}_{\Z[\pi]}(\mathrm{C}_2(S^2),\zt) \cong \zt$,
since any $\Z[\pi]$-module homomorphism $\chi : \mathrm{C}_2(S^2) \to
\zt$ is completely determined by its value on the generator $e^2_+$ of
$\mathrm{C}_2(S^2)$. If $\tau \in
\mathrm{Hom}_{\Z[\pi]}(\mathrm{C}_1(S^2),\zt)$, then
\begin{align*}
  (\delta_2 \tau)(e^2_+)& = \tau (\partial_2 e^2_+)  =
  \tau((1+t)e^1_+)\\
& = \tau(e^1_+) + f_*(t) \tau(e^1_+) = \tau(e^1_+) - \tau(e^1_+)=0
\end{align*}
and so $\mathrm{im}\,\delta_2 = \{0\}$. Hence, $\twco \cong \zt$ as
claimed. \\

This is  the space of Chern classes of Lagrangian fibrations with
base $\rpr$ and monodromy given by $f_*$. We can now construct these
fibrations explicitly, first topologically (using ideas from \cite{sf}), and then showing how to
construct the symplectic form. Fix an element $(m,n,p) \in \zt$. Think
of $\rpr$ as $D^2/\sim \times \R$, where $D^2$ denotes the closed disk in
two dimensions and the equivalence relation is defined by $ x \sim -x
$ if $|x|=1$. Let $B(\epsilon)$ be a closed disk of radius $\epsilon >
0$ about $0 \in D^2$ and set $B = B(\epsilon) \times \R$. Recall that
we have a Lagrangian bundle
\begin{displaymath}
   \xymatrix@1{\tth \;\ar@{^{(}->}[r] & M_0  \ar[r] & X}
\end{displaymath}
\noindent
whose Chern class is clearly $(0,0,0) \in \zt$. Construct a new bundle
$M \to \rpr$ by defining the total space by
\begin{displaymath}
  M = (M_0 - \pi^{-1}(\mathrm{int}B)) \cup (\tth \times B)
\end{displaymath}
where $\tth \times \partial B$ is attached to $\pi^{-1}(\partial B)$
via the map
\begin{displaymath}
  h ((\epsilon \cos (\theta),\epsilon \sin(\theta), s), \tvec) =
  ((\epsilon \cos (\theta),\epsilon \sin(\theta), s),
  \frac{\theta}{2\pi}(m,n,p) + \tvec)
\end{displaymath}
(the fibre is thought of as $\tth = \rt/\zt$). This defines a
$\tth$-bundle with base space $\rpr$, the monodromy is
unchanged and its Chern class (seen in this case as the obstruction
to the existence of a global section) is $(m,n,p) \in \zt$. Theorem
\ref{thm:daz} ensures that there exists a unique (up to
symplectomorphism) symplectic structure on $M$ that makes it
Lagrangian. Since the choice of $(m,n,p)$ was arbitrary, we have
constructed \emph{all} the symplectic types of Lagrangian fibrations
with base space $\rpr$ and monodromy given by $f_*$.\\

These bundles can also be constructed in a different
fashion, which elucidates how the symplectic structure comes
about. This method generalises ideas from Bates' paper
\cite{bates_ob}. The universal cover $\tilde{X}$ of $\rpr$ is
diffeomorphic to $S^2 \times \R$. Consider the trivial Lagrangian
fibration
\begin{displaymath}
  \xymatrix@1{\tth \;\ar@{^{(}->}[r] & \tilde{X} \times \tth
    \ar[r]^-{\tilde{\pi}} & \tilde{X}}
\end{displaymath}
\noindent
with the standard symplectic form on the total space. Fix some small
$\epsilon > 0$. Let $B_{\pm}(\epsilon) \subset S^2$ be closed discs of
radius $\epsilon$ centred at $(1,0,0)$ and $(-1,0,0)$ respectively
(here we identify the open upper and lower hemispheres of $S^2$ with
$\R^2$). Set $B_{\pm} = B_{\pm}(\epsilon) \times \R \,(=\bigcup_{r >0} B_{\pm}(r
\epsilon) \times \{r\})$. We define a new bundle with base space
$\tilde{X}$ by constructing the following total space
\begin{displaymath}
  \tilde{M} = ((\tilde{X} \times \tth) -
  (\tilde{\pi}^{-1}(\mathrm{int}B_+) \cup
  \tilde{\pi}^{-1}(\mathrm{int}B_-))) \cup (\tth \times B_{+}) \cup
  (\tth \times B_{-})
\end{displaymath}
\noindent
where $\tth \times \partial B_+$ is attached to
$\tilde{\pi}^{-1}(\partial B_+)$ via the attaching map

\begin{displaymath}
h_+(\xvec,\tvec) = (\xvec, t^1 + \arg(x^1 + \imath x^2), t^2 + \frac{1}{2}\log(\frac{(x^1)^2+(x^2)^2}{1+(x^2)^2}), t^3)  
\end{displaymath}
\noindent
and  $\tth \times \partial B_-$ is attached to $\tilde{\pi}^{-1}(\partial B_-)$ via the attaching map

\begin{displaymath}
h_-(\xvec,\tvec) = (\xvec, t^1 - \arg(x^1 + \imath x^2) + \pi, t^2 - \frac{1}{2}\log(\frac{(x^1)^2+(x^2)^2}{1+(x^2)^2}), t^3)  
\end{displaymath}
\noindent
Note that here we think of $\tth = \rt/2\pi \zt$ and so there is no
ambiguity when defining the argument function $\arg$. This defines a
$\tth$-bundle with base space $\tilde{X}$. Moreover, the local
(natural) symplectic forms patch together (since $\partial_2
(\arg(x^1+\imath x^2)) = \partial_1
(\frac{1}{2}\log(\frac{(x^1)^2+(x^2)^2}{1+(x^2)^2}))$) to yield a
global symplectic form $\tilde{\omega}$ with respect to which the
fibres are Lagrangian. Hence this new bundle is a Lagrangian
fibration. \\
\noindent
The standard antipodal action on $\tilde{X} \times \tth$ sends $(\tilde{X} \times \tth) -
(\tilde{\pi}^{-1}(\mathrm{int}B_+) \cup
\tilde{\pi}_1^{-1}(\mathrm{int}B_-)))$ to itself and swaps  $(\tth \times B_{+})$ and $
(\tth \times B_{-})$. Note further that the action is compatible
with the attaching maps, since

\begin{align*}
a \cdot h_+(\xvec,\tvec) &= a \cdot (\xvec, t^1 + \arg(x^1 + \imath x^2), t^2 + \frac{1}{2}\log(\frac{(x^1)^2+(x^2)^2}{1+(x^2)^2}), t^3)  \\
&= (-\xvec,- t^1 - \arg(x^1 + \imath x^2),- t^2 - \frac{1}{2}\log(\frac{(x^1)^2+(x^2)^2}{1+(x^2)^2}), -t^3)
\end{align*}
\noindent
and

\begin{align*}
h_- (a \cdot(\xvec,\tvec)) &= h_- (-\xvec, -\tvec) \\
&= (-\xvec,- t^1 - \arg(-x^1 - \imath x^2)+ \pi,- t^2 - \frac{1}{2}\log(\frac{(-x^1)^2+(-x^2)^2}{1+(-x^2)^2}), -t^3) \\
&= (-\xvec,- t^1 - \arg(x^1 + \imath x^2),- t^2 - \frac{1}{2}\log(\frac{(x^1)^2+(x^2)^2}{1+(x^2)^2}), -t^3)
\end{align*}
\noindent
since $\arg(-x^1-\imath x^2) = \arg(x^1 + \imath x^2) + \pi$ (mod
$2\pi$). Furthermore, the same proof shows that it is a $\Z_2$-action
on the total space of the new Lagrangian fibration. In particular, the
above also shows that this $\Z_2$-action is by bundle automorphisms
(as it commutes with the attaching maps) and by
symplectomorphisms. Taking quotients, we obtain a Lagrangian fibration

\begin{displaymath}
\xymatrix@1{ \tth \;\ar@{^{(}->}[r] & M(1,0,0) \ar[r] & \rpr}
\end{displaymath}
\noindent
Its monodromy is again given by $f_*$ and, by construction, its Chern
class is precisely $(1,0,0)$. \\

We now show how to generalise this construction to produce a
Lagrangian fibration with base $\rpr$ and
arbitrary Chern class. Let $G \in \mathrm{GL}(3,\Z)$ and define
disjoint cones $B^{G}_{\pm}$ so that $B_{\pm} = G
B^G_{\pm}$. Set $\phi_+ (\xvec) = (\arg(x^1 + \imath x^2),
\frac{1}{2}\log(\frac{(x^1)^2+(x^2)^2}{1+(x^2)^2}), 0)$
and $\phi_- (\xvec) = (-\arg(x^1 + \imath x^2)+\pi,
-\frac{1}{2}\log(\frac{(x^1)^2+(x^2)^2}{1+(x^2)^2}), 0)$.
We define a new bundle with base space $\tilde{X}$ whose
base space is given by
\begin{displaymath}
  \tilde{M}^G = ((\tilde{X} \times \tth) -
  (\tilde{\pi}^{-1}(\mathrm{int}B^{G}_+) \cup
  \tilde{\pi}^{-1}(\mathrm{int}B^{G}_-))) \cup (\tth \times B^G_{+}) \cup
  (\tth \times B^G_{-})
\end{displaymath}
\noindent
where $\tth \times \partial B^G_+$ is attached to
$\tilde{\pi}^{-1}(\partial B^G_+)$ via the attaching map

\begin{displaymath}
h^G_+(\xvec,\tvec) = (\xvec, \tvec + (G^{-1})^T \phi_+ (G\xvec))  
\end{displaymath}
\noindent
and  $\tth \times \partial B^G_-$ is attached to $\tilde{\pi}^{-1}(\partial B^G_-)$ via the attaching map

\begin{displaymath}
h^G_-(\xvec,\tvec) = (\xvec, \tvec + (G^{-1})^T \phi_-(G \xvec))  
\end{displaymath}
\noindent
Since $G$ is linear, we can still define a $\Z_2$-action as we did in
the previous example and all the claims made above follow through as
well (again, by linearity of $G$). Set $(m,n,p) = (G^{-1})^T(1,0,0)$. 
In particular, by passing to the quotient, we obtain a
Lagrangian fibration

\begin{displaymath}
\xymatrix@1{ \tth \;\ar@{^{(}->}[r] & M(m,n,p) \;\ar[r] & \rpr}
\end{displaymath}
\noindent
whose monodromy is still given by $f_*$ and whose Chern class is
precisely given by $(m,n,p)$. Since $G \in
\mathrm{GL}(3,\Z)$ was arbitrary and the orbit
of $(1,0,0)$ under the natural action of $\mathrm{GL}(3,\Z)$ on $\zt$
is $\zt$, we can construct in this fashion all
the topological (and symplectic) types of Lagrangian fibrations with
base space $\rpr$ and monodromy given by $f_*$. It is important to
notice that we exploit in a crucial fashion the natural linear action of
$\mathrm{GL}(3,\Z)$ on $\tth$ in this example.

\section{Concluding Remarks}\label{sec:concl}
While the construction carried in section \ref{sec:exs} might be a
simple exercise in differential topology, we believe it has
nonetheless highlighted the importance of the integral affine
structure on the base space of a Lagrangian fibration. This is also
why the whole paper has been centred on understanding topological
invariants of Lagrangian fibrations primarily using this rigid
structure on the base. Furthermore, this remark might be of some use
in understanding special Lagrangian fibrations, which play a
fundamental role in
mirror symmetry.


\bibliographystyle{abbrv}
\bibliography{mybib}
\end{document}